\def\H{\mathcal{H}}
\def\NN{\mathbb{N}}
\def\hs{\hbox to 3mm{}}
\def\hhs{\hbox to 5cm{}}
\def\ss{\smallskip}
\def\wmat{\mathrm{WMat}}
\author{G\'erard H.E. Duchamp\inst{1}, Nguyen Hoang-Nghia\inst{1}, Adrian Tanasa\inst{1,2}}
\authorrunning{G.H.E. Duchamp et al.}
\title{A selection-quotient process for packed word Hopf algebra}
\titlerunning{A selection-quotient packed word Hopf algebra}
\institute{${}^{1}$LIPN, UMR 7030 CNRS, Institut Galil\'ee - Universit\'e Paris 13, Sorbonne Paris Cit\'e, 
99 avenue J.-B. Cl\'ement, 93430 Villetaneuse, France, EU \\ 
${}^{2}$
Horia Hulubei National Institute for Physics and Nuclear Engineering,\\
P.O.B. MG-6, 077125 Magurele, Romania, EU\\
\email{ghed@lipn.univ-paris13.fr}\\ 
\email{nguyen.hoang@lipn.univ-paris13.fr} \\ \email{adrian.tanasa@ens-lyon.org}}
\begin{document}

\maketitle

\begin{abstract}
In this paper, we define a Hopf algebra structure on the vector space spanned by packed words 
using a selection-quotient coproduct. 
We show that this algebra is free on its irreducible packed words. 
Finally, we give some brief explanations on the Maple codes we have used.

\keywords{Hopf algebras, free algebras}
\end{abstract}


\section{Introduction}\label{sec:intro}

In computer science, one is led to 
the 
study of  algebraic structures based on trees, graphs, tableaux, matroids, words and other discrete structures. 
Hopf algebras are also shown to play an important role in 
quantum field theory \cite{CK00}, non-commutative QFT \cite{TK}, \cite{TVT08}, 
(see also the review articles
\cite{Tan10a} \cite{Tan12}), 
or in quantum gravity spin-foam models \cite{Mar03}, \cite{Tan10b}. 
These algebras use a selection-quotient rule for the coproduct.
\begin{equation}\label{eq:type1} \Delta(S) = \sum_{\substack{A \subseteq S\\ +\, Conditions}} S[A] \otimes S/_A ,
\end{equation} 
where $S$ is some (general) combinatorial object (tree, graph, matroid, {\it etc.}), 
$S[A]$ is a substructure of $S$ and $S/_A$ is a quotient.


The present article introduces a new Hopf algebraic structure, 
which we call $\mathrm{WMat}$, on the set of packed words.  
The product is given by the shifted concatenation and 
the coproduct is given by such a selection-quotient principle. 




\section{Algebra structure}
\subsection{Definitions}

Let $X$ be an infinite totally ordered alphabet $\{x_i\}_{i \geq 0}$ and $X^*$ be the set of words with letters 
in the alphabet $X$. 

A word $w$ of length $n=|w|$ is a mapping $i\mapsto w[i]$ from $[1..|w|]$ to $X$. For a letter $x_i\in X$, the partial degree $|w|_{x_i}$ is the number of times the letter $x_i$ occurs in the word $w$. One has:
\begin{equation}
	|w|_{x_i}\ =\sum_{j=1}^{|w|} \delta_{w[j],x_i}.
\end{equation}
For a word $w\in X^*$, one defines the alphabet $Alph(w)$ as the set of its letters, while $IAlph(w)$ is the set of indices in $Alph(w)$.
\begin{equation}
Alph(w)=\{x_i|\ |w|_{x_i}\not=0\}\ ;\ IAlph(w)=\{i\in \NN|\ |w|_{x_i}\not=0\}.
\end{equation}
The upper bound $sup(w)$ is the supremum of $IAlph(w)$, i. e. 
\begin{equation}
sup(w) = sup_{\,\mathbb{N}} (IAlph(w)).
\end{equation}
Note that $sup(1_{X^*}) = 0$. 

\ss
Let us define the substitution operators. Let $w=x_{i_1} \dots x_{i_m}$ and 

$\phi : IAlph(w) \longrightarrow \mathbb{N}$, with $\phi(0)=0$. One then defines:
\begin{equation}\label{eq:subs}
S_\phi(x_{i_1} \dots x_{i_m}) = x_{\phi(i_1)} \dots x_{\phi(i_m)}.
\end{equation} 

Let us define the pack operator of a word $w$. 
Let $\{j_1 , \dots ,j_k \} = IAlph(w)\setminus \{0\}$ with $j_1 < j_2 < \dots < j_k$ and define
$\phi_w$ as 
\begin{equation}\label{eq:packedword}
\phi_w(i) = \begin{cases} m \mbox{ if } i = j_m \\ 0 \mbox{ if } i = 0 \end{cases}.
\end{equation}                                                 

The corresponding packed word, denoted by $pack(w)$, is $S_{\phi_w}(w)$. 
This means that if the word $w$ has (one or several) ``gap(s)'' 
between the indices of its letters, then in the word  $pack(w)$ these
gaps have vanished (the indices of the respective letters being modified accordingly).

\begin{example}
Let $w = x_1 x_1 x_5 x_0 x_4$ . One then has $pack(w) = x_1 x_1 x_3 x_0 x_2$.
\end{example}

A word $w \in X^*$  is said to be \textit{packed} if $w = pack(w)$.

\begin{example}
The packed words of weight 2 are $x_0^{k_1} x_1 x_0^{k_2} x_1 x_0^{k_3}$, with $k_1, k_2, k_3 \geq 0$.
\end{example}

The operator $pack: X^* \longrightarrow X^*$ is idempotent ($pack \circ pack = pack$). It defines, by linear extension, a projector. The image, $pack(X^*)$, is the set of packed words.

\ss
Let $u,v$ be two words; one defines the shifted concatenation $*$ by \begin{equation}u*v = uT_{sup(u)}(v),\end{equation} where, for $t \in \NN$, $T_t(w)$ denotes the image of $w$ by $S_\phi$ for $\phi(n) = n+t$ if $n>0$ and $\phi(0) = 0$ (in general, all letters can be reindexed except $x_0$). It is straightforward to check that, in the case the words are packed, the result of a shifted concatenation is a packed word.

\begin{definition}
Let $k$ be a field. One defines a vector space $\mathcal{H} = span_{k}(pack(X^*))$. One can endow this space with a product (on the words) given by 
 \begin{align*}\mu : &\mathcal{H} \otimes \mathcal{H} \longrightarrow \mathcal{H}, \\ & u \otimes v \longmapsto u*v.\end{align*}
\end{definition}

\begin{remark}
The product above is similar to the shifted concatenation for permutations. 
Moreover, if $u,v$ are two words in $X^*$, then $sup(u*v) = sup(u) + sup(v)$. 
\end{remark}

\begin{proposition}
$(\mathcal{H},\mu,1_{X^*})$ is an associative algebra with unit (AAU).
\end{proposition}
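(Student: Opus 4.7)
The plan is to verify the three axioms in turn: stability of $\mathcal{H}$ under $\mu$, associativity of $\mu$, and the unit property of $1_{X^*}$, extending $\mu$ bilinearly from the basis of packed words to the whole space.

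First, I would observe that stability has essentially been handled in the paragraph defining $*$: concatenating a packed word $u$ with $T_{\sup(u)}(v)$ for $v$ packed produces a word whose set of indices is exactly $\{0,1,\dots,\sup(u)+\sup(v)\}$, so the result is packed. So $\mu$ really is a well-defined map $\mathcal{H}\otimes\mathcal{H}\to\mathcal{H}$.

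The main (but still short) step is associativity. The key identity is that the shift operators satisfy $T_a\circ T_b = T_{a+b}$: on $0$ both sides give $0$, and on a positive index $n$ both sides give $n+a+b$. Combined with the fact recorded in the preceding remark that $\sup(u*v) = \sup(u)+\sup(v)$, and with the elementary property that $T_t$ is a morphism for ordinary concatenation, I would compute
\begin{equation*}
(u*v)*w = u\cdot T_{\sup(u)}(v)\cdot T_{\sup(u)+\sup(v)}(w)
\end{equation*}
and
\begin{equation*}
u*(v*w) = u\cdot T_{\sup(u)}\bigl(v\cdot T_{\sup(v)}(w)\bigr) = u\cdot T_{\sup(u)}(v)\cdot T_{\sup(u)+\sup(v)}(w),
\end{equation*}
and conclude equality. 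This step is the only one with any content, and even here the only obstacle is making sure the bookkeeping on indices is consistent; the composition rule for the $T_t$ is what makes the computation collapse.

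Finally, for the unit, I would note that $1_{X^*}$ is packed and that $\sup(1_{X^*})=0$ as stated in the excerpt. Then $T_0$ is the identity on $X^*$, so $1_{X^*}*v = 1_{X^*}\cdot T_0(v) = v$ for any packed $v$, and $u*1_{X^*} = u\cdot T_{\sup(u)}(1_{X^*}) = u\cdot 1_{X^*} = u$. Extending by linearity gives the unit axiom in $\mathcal{H}$, completing the proof.
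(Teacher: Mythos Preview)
Your proof is correct and follows essentially the same route as the paper's: the paper's argument is the same one-line associativity computation $(u*v)*w = u\,T_{\sup(u)}(v)\,T_{\sup(u)+\sup(v)}(w) = u*(v*w)$ together with the same unit check, while you make the underlying identities $T_a\circ T_b = T_{a+b}$ and $\sup(u*v)=\sup(u)+\sup(v)$ explicit. One minor slip in your stability paragraph: the index set of $u*v$ need not contain $0$ (e.g.\ $x_1*x_1=x_1x_2$ has $IAlph=\{1,2\}$); the correct statement is that the \emph{nonzero} indices of $u*v$ form exactly $\{1,\dots,\sup(u)+\sup(v)\}$, which is precisely the packedness condition, so your conclusion stands.
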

\begin{proof}

Let $u,v,w$ be three words in $\mathcal{H}$. One then has:
\begin{align}
(u*v)*w  = u(T_{sup(u)}(v)(T_{sup(u) +sup(v)})(w)) = u* (v*w). 
\end{align}

Thus, the algebra $(\mathcal{H},\mu)$ is associative. On the other hand, for all $u \in pack(X^*)$, one can easily check that
\begin{equation*}u*1_{X^*}  = u = 1_{X^*}*u.\end{equation*}

Now remark that $pack(1_{X^*})=1_{X^*}$. This is clear from the fact that $1_{X^*} = 1_\mathcal{H}$.

One concludes that $(\mathcal{H},\mu,1_{X^*})$ is an AAU.
\qed \end{proof}

As already announced in the introduction, we call this algebra $\wmat$.

\begin{remark}
The product is non-commutative, for example: $x_1*x_1x_1 \neq x_1x_1*x_1$.
\end{remark}

Let $w = x_{k_1} \dots x_{k_n}$ be a word and $I \subseteq [1 \dots n]$. A sub-word $w[I]$ is defined as $x_{k_{i_1}} \dots x_{k_{i_l}}$, where
$i_j \in I$.

\begin{lemma}\label{lm:subword}Let $u,v$ be two words. Let $I \subset [1\dots |u|]$ and $J\subset [|u|+1\dots |u|+|v|]$. One then has \begin{equation}\label{eq:mor1}pack(u*v[I+J]) = pack(u[I])*pack(v[J']),\end{equation} where $J'$ is the set $\{i-|u|\}_{i\in J}$.
\end{lemma}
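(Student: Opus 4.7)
The plan is to unfold both sides of \eqref{eq:mor1} using the definitions of $*$, subword extraction, and $pack$, and to exploit the fact that in the word $u*v$ the alphabets contributed by the $u$-part and the $v$-part have disjoint, order-separated index ranges.

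First I would rewrite the left-hand side. Since the first $|u|$ letters of $u*v$ coincide with $u$ and the last $|v|$ letters are $T_{sup(u)}(v)$, and since $I\subseteq[1..|u|]$ while $J\subseteq[|u|+1..|u|+|v|]$, we have
\begin{equation*}
(u*v)[I+J] \;=\; u[I]\cdot T_{sup(u)}(v)[J'] \;=\; u[I]\cdot T_{sup(u)}\bigl(v[J']\bigr),
\end{equation*}
where the last equality uses that $T_{sup(u)}$ is a letter-wise substitution and therefore commutes with subword extraction. Writing $u'=u[I]$ and $v'=v[J']$ for brevity, the LHS is $pack(u'\cdot T_{sup(u)}(v'))$.

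The key observation is then the following separation property: the nonzero indices of $Alph(u')$ are contained in $\{1,\dots,sup(u)\}$, while the nonzero indices appearing in $T_{sup(u)}(v')$ lie in $\{sup(u)+1,sup(u)+2,\dots\}$. Hence $IAlph(u')\setminus\{0\}$ and $IAlph(T_{sup(u)}(v'))\setminus\{0\}$ are disjoint and totally separated, the first being strictly below the second. Consequently, if we list the nonzero indices of $u'\cdot T_{sup(u)}(v')$ in increasing order, say $j_1<\dots<j_{k_1}<j_{k_1+1}<\dots<j_{k_1+k_2}$, the first $k_1$ of them are exactly the nonzero indices of $u'$ (in order) and the last $k_2$ are exactly the nonzero indices of $T_{sup(u)}(v')$ (in order), where $k_1=|IAlph(u')\setminus\{0\}|$ and $k_2=|IAlph(v')\setminus\{0\}|$.

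Applying the packing substitution $\phi_w$ of \eqref{eq:packedword} to $w=u'\cdot T_{sup(u)}(v')$ therefore sends the $u'$-part to $pack(u')$ (since the restriction of $\phi_w$ to $IAlph(u')$ is precisely $\phi_{u'}$) and sends the $T_{sup(u)}(v')$-part to $T_{k_1}(pack(v'))$ (since the restriction of $\phi_w$ to the upper block is $\phi_{v'}$ shifted by $k_1$, and $T_{sup(u)}$ is washed out by the packing). Thus
\begin{equation*}
pack\bigl(u'\cdot T_{sup(u)}(v')\bigr) \;=\; pack(u')\cdot T_{k_1}\bigl(pack(v')\bigr).
\end{equation*}
To finish, it suffices to observe that $sup(pack(u'))=k_1$, so the right-hand side above is exactly $pack(u')*pack(v')$, yielding \eqref{eq:mor1}. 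The only real subtlety is the index bookkeeping in step two (the disjointness and ordering of the two index blocks, and the special status of the letter $x_0$ which is fixed by all the relevant substitutions); the rest is a direct unfolding of definitions.
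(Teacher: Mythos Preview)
Your proof is correct and follows essentially the same route as the paper's: both rewrite $(u*v)[I+J]$ as $u[I]\cdot T_{sup(u)}(v[J'])$ and then argue that packing this word yields $pack(u[I])*pack(v[J'])$. The paper presents this as a terse chain of equalities (``direct computation''), whereas you spell out the index-separation argument that justifies the final step; your version is more detailed but not a different method.
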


\begin{proof}
By direct computation, one has:
\begin{eqnarray}\label{eq:prod}
&  pack(u*v[I+J]) = pack(uT_{sup(u)}(v)[I+J]) = pack(u[I]T_{sup(u)}(v)[J])\notag\\ 
& =pack(u[I]T_{sup(u[I])}(v[J'])) = pack(u[I])*pack(v[J']). 
\end{eqnarray}
\qed \end{proof}

\ss
\begin{theorem}
Let $k<X>$ be equipped with the shifted concatenation. The mapping pack \begin{equation}
k<X> \xrightarrow{pack} \mathcal{H}
\end{equation} is then a morphism AAU.
\end{theorem}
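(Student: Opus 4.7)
The plan is to verify the two defining conditions of a morphism of AAU, namely preservation of the unit and of the product, both of which follow almost immediately from results already established.

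First I would handle the unit. Since $pack(1_{X^*}) = 1_{X^*}$ (the empty word is already packed, as $IAlph(1_{X^*}) = \emptyset$ trivially satisfies the packed condition), and $1_{X^*} = 1_{\mathcal{H}}$, the unit condition $pack(1_{k\langle X\rangle}) = 1_{\mathcal{H}}$ is immediate. This was in effect noted in the preceding proposition.

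Next I would prove multiplicativity, i.e.\ $pack(u*v) = pack(u)*pack(v)$ for all $u,v \in X^*$. The key observation is that this is exactly the content of Lemma~\ref{lm:subword} specialized to the case where the index sets are maximal. Concretely, I would take $I = [1\dots|u|]$ and $J = [|u|+1\dots|u|+|v|]$, so that $u[I] = u$, $v[J'] = v$, and $(u*v)[I+J] = u*v$. Substituting these choices into equation~\eqref{eq:mor1} yields $pack(u*v) = pack(u)*pack(v)$, which is precisely the multiplicativity condition. Since both sides of this identity are $k$-bilinear in $u$ and $v$, the equality extends by linearity to all of $k\langle X\rangle \otimes k\langle X\rangle$.

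Combining the two properties, $pack$ is a unital algebra morphism from $(k\langle X\rangle, *, 1_{X^*})$ to $(\mathcal{H}, \mu, 1_{X^*})$, which is the claim. The only conceptual step is recognizing that the theorem is really a corollary of Lemma~\ref{lm:subword}; since that lemma has already been proven by direct computation using the compatibility of $pack$ with the shift operator $T_{sup(\cdot)}$, there is no remaining obstacle.
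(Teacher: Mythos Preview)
Your proposal is correct and follows exactly the paper's own approach: apply Lemma~\ref{lm:subword} with the maximal choices $I=[1\dots|u|]$ and $J=[|u|+1\dots|u|+|v|]$ to obtain multiplicativity, the unit condition being trivial. The only difference is that you spell out the unit check and the bilinear extension explicitly, whereas the paper leaves these implicit.
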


\begin{proof}
By using the Lemma \ref{lm:subword} and taking $I=[1 \dots |u|]$ and $J = [|u|+1 \dots |u|+|v|]$, one gets the conclusion.
\qed \end{proof}

\subsection{$\wmat$ is a free algebra}\label{sec:freealg}

$\wmat$ is, by construction, the algebra of the monoid $pack(X^*)$, therefore to check that $\wmat$ is a free algebra, it is sufficient to show that $pack(X^*)$ is a free monoid on its atoms. 

Here we will use an ``internal" characterization of free monoids in terms of irreducible elements.

\begin{definition}\label{def:irrword}
A packed word $w$ in $pack(X^*)$ is called an irreducible word 
if and only if it can not be written under the form $w = u*v$, 
 where $u$ and $v$ are two non trivial packed words.
\end{definition}

\begin{example}
The word $x_1x_1x_1$ is an irreducible word. The word $x_1x_1x_2$ is a reducible word because 
it can be written as $x_1x_1x_2 = x_1x_1 *x_1$.
\end{example}

\begin{proposition}
If $w$ is a packed word, then $w$ can be written uniquely as 
$w = v_1*v_2*\dots *v_n$, where $v_i$, $1\leq i \leq n$, are non-trivial irreducible words.
\end{proposition}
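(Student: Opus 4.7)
The plan is to prove existence and uniqueness by parallel inductions on $|w|$. For existence, if $w$ is already irreducible, take $n=1$; otherwise write $w = u * v$ with $u, v$ non-trivial packed words, each of length strictly less than $|w|$, and apply the induction hypothesis separately to $u$ and $v$, concatenating the resulting lists via $*$.

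The substantive work lies in uniqueness. Suppose $w = v_1 * \dots * v_n = u_1 * \dots * u_m$ with all factors non-trivial irreducibles. Viewed as plain words both factorizations spell out $w$, so $v_1$ and $u_1$ are both packed prefixes of $w$; I aim to show $v_1 = u_1$, then cancel on the left and induct. Assume for contradiction that $|v_1| < |u_1|$, and write $u_1 = v_1 \cdot y$ where $y = w[|v_1|+1\dots|u_1|]$. The identity $w = v_1 \cdot T_{sup(v_1)}(v_2 * \dots * v_n)$ forces every letter of $y$ to have index either $0$ or strictly greater than $sup(v_1)$, so $y = T_{sup(v_1)}(y')$ for a unique word $y'$, giving $u_1 = v_1 * y'$.

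The decisive step is to verify that $y'$ is itself packed, so that $u_1 = v_1 * y'$ contradicts the irreducibility of $u_1$. Because $u_1$ is packed its positive indices are exactly $\{1,\dots,sup(u_1)\}$; the prefix $v_1$, also packed, contributes exactly $\{1,\dots,sup(v_1)\}$; hence $y$ must supply each of $\{sup(v_1)+1,\dots,sup(u_1)\}$ and no other positive index. Applying $T_{sup(v_1)}^{-1}$, the positive indices of $y'$ are exactly $\{1,\dots,sup(u_1)-sup(v_1)\}$, so $y'$ is packed. Since $|y'| = |u_1| - |v_1| > 0$, both $v_1$ and $y'$ are non-trivial, contradicting the irreducibility of $u_1$. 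Therefore $|v_1| = |u_1|$ and $v_1 = u_1$.

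To conclude, I cancel: since $T_{sup(v_1)}$ is injective on words, the equality $v_1 \cdot T_{sup(v_1)}(v_2 * \dots * v_n) = u_1 \cdot T_{sup(u_1)}(u_2 * \dots * u_m)$ together with $v_1 = u_1$ yields $v_2 * \dots * v_n = u_2 * \dots * u_m$, a packed word strictly shorter than $w$, and the induction hypothesis finishes the argument. The anticipated obstacle is the packedness of $y'$; everything else is routine, but that step rests on the combinatorial observation that a packed prefix of a packed word must exhaust a consecutive initial block of positive indices, leaving the tail free to be shifted down into a packed word itself.
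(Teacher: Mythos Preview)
Your proof is correct. The paper takes a slightly different route: it introduces the notion of an \emph{admissible cut} at position $i$ (meaning $\sup(w[1\dots i]) = \inf(w[i+1\dots|w|]) - 1$ or $\sup(w[i+1\dots|w|]) = 0$), obtains existence by cutting maximally, and argues uniqueness by noting that the first position where two factorizations disagree would be an admissible cut according to one factorization and not the other. Your argument never names admissible cuts; instead you explicitly construct the forbidden factorization $u_1 = v_1 * y'$ and carefully verify that $y'$ is packed by tracking which positive indices must lie in the suffix $y$. The underlying combinatorics is the same---both proofs hinge on the fact that the first disagreement yields a nontrivial $*$-splitting of an irreducible---but the paper's version is terser and leaves the packedness of the cofactor implicit, whereas yours makes that step (which is really the crux) explicit and proceeds by a clean induction without an auxiliary definition.
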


\begin{proof}
The $i^{th}$ position of word $w$ is called an admissible cut if $sup(w[1\dots i]) = inf(w[i+1 \dots |w|]) - 1$ or $sup(w[i+1 \dots |w|]) = 0$, where $inf(w)$ is infimum of $IAlph(w)$. 

Because the length of word is finite, one can get $w = v_1* v_2* \dots * v_n$, 
with $n$ maximal and $v_i$ non trivial, $\forall 1\leq i \leq n$.

One assumes that one word can be written in two ways 
\begin{equation}\label{eq:irre1} w = v_1* v_2* \cdots * v_n\end{equation} 
and 
\begin{equation}\label{eq:irre2} w = v'_1* v'_2* \cdots * v'_m.\end{equation}

Denoting by $k$ the first number such that $v_k \neq v'_k$, without loss of generality, one can suppose that $|v_k|<|v'_k|$. From equation \eqref{eq:irre1}, the $k^{th}$ position is an admissible cut of $w$. From equation \eqref{eq:irre2}, the $k^{th}$ position is not an admissible cut of $w$. One thus has a contradiction. One has $n=m$ and $v_i=v'_i$ for all $1\leq i \leq n$.
\qed \end{proof}

\medskip
One can thus conclude that $pack(X^*)$ is free as monoid with the packed words as a basis.



\section{Bialgebra structure}
  
Let us give the definition of the coproduct and prove that the coassociativity property holds.

\begin{definition}
Let $A \subset X$, one defines $w/A = S_{\phi_A}(w)$ with 

${\phi_A}(i) = \begin{cases} i \mbox{ if } x_i \not\in A,\\ 0 \mbox{ if } x_i \in A.\end{cases}$

Let u be a word. One defines $^w/_u = ^w/_{Alph(u)}$.
\end{definition}

\begin{definition}\label{def:coprod}
The coproduct of $\H$ is given by \begin{equation}\label{eq:coprod}\Delta(w) = \sum_{I+J=[1\dots |w|]}pack(w[I]) \otimes pack(w[J]/_{w[I]}), \forall w \in \H, \end{equation} where this sum runs over all partitions of $[1 \dots |w|]$ divided into two blocks, $I \cup J = [1 \dots |w|]$ and $I \cap J =\emptyset$.
\end{definition}

\begin{example}
One has:
\begin{eqnarray*}
\Delta(x_1x_2x_1) = & x_1x_2x_1 \otimes 1_{X^\ast} + x_1 \otimes x_1x_0 + x_1 \otimes x_1^2 + x_1 \otimes x_0x_1 + x_1x_2 \otimes x_0 \\ & + x_1^2 \otimes x_1 + x_2x_1 \otimes x_0 + 1_{X^\ast}\otimes x_1x_2x_1.
\end{eqnarray*}
\end{example}

\ss
Let us now prove the coassociativity. 

Let $I=[i_1,\dots,i_n]$, and $\alpha$ be a mapping: \begin{align}
\alpha:  I & \longrightarrow [1\dots n],\notag\\  i_s & \longmapsto s.
\end{align}

\begin{lemma}\label{lm:coprod1}
Let $w \in X^*$ be a word, $I$ be a subset of $[1\dots |w|]$ and $I_1 \subset [1 \dots |I|]$. 
One then has
 \begin{equation}\label{eq:coprod1} pack(w[I])[I_1] = S_{\phi_{w[I]}}(w[I'_1]),\end{equation} where $I'_1$ is $\alpha^{-1}(I_1)$ and $\phi_{w[I]}$ is the packing map of $w[I]$ that is given in \eqref{eq:packedword} .
\end{lemma}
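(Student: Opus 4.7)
The plan is to unfold the definitions and reduce the claim to two elementary facts about the substitution operators $S_\phi$ and about subword extraction. The key observation is that the pack operator is defined as a substitution, namely $pack(w[I]) = S_{\phi_{w[I]}}(w[I])$ by \eqref{eq:packedword} and \eqref{eq:subs}, so proving the lemma amounts to showing that $S_{\phi_{w[I]}}(w[I])[I_1] = S_{\phi_{w[I]}}(w[I'_1])$.

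The first auxiliary fact I would record is that every substitution operator $S_\phi$ acts letter by letter: by the very definition \eqref{eq:subs}, the $j^{th}$ letter of $S_\phi(u)$ only depends on the $j^{th}$ letter of $u$. Consequently $S_\phi$ commutes with subword selection, that is, for any $u \in X^*$ and any $J \subseteq [1 \dots |u|]$ one has $S_\phi(u)[J] = S_\phi(u[J])$. Applied to $u = w[I]$ and $J = I_1$, this gives $pack(w[I])[I_1] = S_{\phi_{w[I]}}(w[I][I_1])$.

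The second fact is the composition rule for subwords: if $I = \{i_1 < i_2 < \dots < i_n\}$, then the $s^{th}$ position of $w[I]$ is by construction the $i_s^{th}$ position of $w$, which is exactly how the bijection $\alpha : I \to [1 \dots n]$, $i_s \mapsto s$, is defined. Therefore $w[I][I_1]$ is the word whose letters are $w[i_s]$ for $s \in I_1$, listed in increasing order of $s$; this is precisely $w[\alpha^{-1}(I_1)] = w[I'_1]$. Chaining the two equalities yields $pack(w[I])[I_1] = S_{\phi_{w[I]}}(w[I'_1])$, as required.

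There is no serious obstacle: the whole content of the lemma is bookkeeping about two layers of indices (positions in $w$ versus positions in the subword $w[I]$), cleanly mediated by the bijection $\alpha$. The only place one must be a bit careful is to keep the roles of $I_1$ and $I'_1$ straight and to note that $\phi_{w[I]}$ is applied unchanged on both sides because it depends on the packing of $w[I]$, not on which of its positions are subsequently selected.
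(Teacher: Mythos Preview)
Your proof is correct and is precisely the ``direct check'' the paper alludes to: the paper's own proof consists of the single sentence ``Using the definition of packing map $\phi_{w[I]}$, one can directly check that equation \eqref{eq:coprod1} holds.'' You have simply made explicit the two ingredients of that check, namely that $S_\phi$ commutes with subword extraction and that $w[I][I_1]=w[\alpha^{-1}(I_1)]$.
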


\begin{proof}
Using the definition of packing map $\phi_{w[I]}$, one can directly check that equation \ref{eq:coprod1} holds.
\qed \end{proof}

\ss
\begin{lemma}Let $w \in X^*$ be a word and $\phi$ be a strictly increasing map from $IAlph(w)$ to $\mathbb{N}$. One then has:
\begin{itemize}
\item[1)]  \begin{equation}pack(S_{\phi}(w)) = pack(w).\end{equation}
\item[2)] \begin{equation}\label{eq:quotionpack}S_{\phi}(^{w_1}/_{w_2})= ^{S_{\phi}(w_1)}/_{S_{\phi}(w_2)}.\end{equation}
\end{itemize}\label{lm:coprod2}
\end{lemma}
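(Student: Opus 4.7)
The plan is to handle the two items in succession, both by direct computation exploiting the composition rule $S_\psi \circ S_\chi = S_{\psi \circ \chi}$ for substitution operators (understood with the standing convention $\phi(0)=0$ from \eqref{eq:subs}).

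For item 1, I would enumerate $IAlph(w) \setminus \{0\} = \{j_1 < j_2 < \cdots < j_k\}$. Since $\phi$ is strictly increasing and sends $0$ to $0$, the non-zero indices appearing in $S_\phi(w)$ are $\phi(j_1) < \phi(j_2) < \cdots < \phi(j_k)$ in the same order. Consequently the packing map of $S_\phi(w)$ satisfies $\phi_{S_\phi(w)}(\phi(j_m)) = m = \phi_w(j_m)$ for every $m$, i.e. $\phi_{S_\phi(w)} \circ \phi = \phi_w$ on $IAlph(w)$. The composition rule then yields $pack(S_\phi(w)) = S_{\phi_{S_\phi(w)}}(S_\phi(w)) = S_{\phi_{S_\phi(w)} \circ \phi}(w) = S_{\phi_w}(w) = pack(w)$.

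For item 2, I would unfold the quotient through its definition $^u/_v = S_{\phi_{Alph(v)}}(u)$ and apply the composition rule on both sides of \eqref{eq:quotionpack}. The left-hand side becomes $S_{\phi \circ \phi_{Alph(w_2)}}(w_1)$ and the right-hand side becomes $S_{\phi_{Alph(S_\phi(w_2))} \circ \phi}(w_1)$. It then suffices to verify the pointwise equality $\phi \circ \phi_{Alph(w_2)} = \phi_{Alph(S_\phi(w_2))} \circ \phi$ on $IAlph(w_1)$, which I would do by case split on whether $x_i \in Alph(w_2)$: in the first case both sides equal $\phi(0) = 0$; in the second both sides equal $\phi(i)$. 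The crucial input is that $\phi$, being strictly increasing, is injective, so $x_{\phi(i)} \in Alph(S_\phi(w_2))$ if and only if $x_i \in Alph(w_2)$, which is precisely what makes the two cases line up.

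Neither step presents a genuine obstacle: both reduce to the functoriality of $S_\bullet$ together with a use of strict monotonicity of $\phi$ (as order-preservation in item 1, as injectivity in item 2). The only mild subtlety is bookkeeping: one must carry the convention $\phi(0)=0$ throughout, and for item 2 one must tacitly read the domain of $\phi$ as containing $IAlph(w_1) \cup IAlph(w_2)$ rather than the $IAlph(w)$ of a single word as written in the preamble of the lemma.
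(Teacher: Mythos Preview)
Your proposal is correct and follows essentially the same approach as the paper: both items reduce to the composition rule $S_\psi\circ S_\chi=S_{\psi\circ\chi}$, with item~1 hinging on the identity $\phi_{S_\phi(w)}\circ\phi=\phi_w$ and item~2 on the identity $\phi\circ\phi_{Alph(w_2)}=\phi_{Alph(S_\phi(w_2))}\circ\phi$. The only difference is that you spell out the verifications (the enumeration argument for item~1 and the case split for item~2) that the paper leaves as a ``direct check'', and you helpfully flag the implicit assumptions on $\phi(0)=0$ and on the domain of $\phi$ in item~2.
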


\begin{proof}

1) Using the definition \eqref{eq:packedword} of the packing map, one has the following identity:
\begin{equation*}
pack(S_{\phi}(w))  = S_{\phi_{S_{\phi}(w)}}(S_{\phi}(w))= S_{\phi_{S_{\phi}(w)}\circ\phi}(w) = S_{\phi_w}(w) = pack(w).
\end{equation*}
 
2) Let $I_2 = Alph(w_2)$ and $I'_2 = Alph(S_{\phi}(w_2))$. One can directly check that $\phi\circ\phi_{I_2}(i) = \phi_{I'_2}\circ\phi(i)$, for all $i \in IAlph(w_1)$. One thus has 
\begin{eqnarray}
& S_{\phi}(^{w_1}/_{w_2})= S_{\phi}(S_{\phi_{I_2}}(w_1)) = S_{\phi\circ\phi_{I_2}}(w_1) = \notag\\ &  S_{\phi_{I'_2}\circ\phi}(w_1) = S_{\phi_{I'_2}}(S_{\phi}(w_1)) = ^{S_{\phi}(w_1)}/_{S_{\phi}(w_2)}.
\end{eqnarray}
\qed \end{proof}

\ss
\begin{lemma} Let $w$ be a word in $\mathcal{H}$, and $I,J,K$ be three disjoint subsets of $\{1 \dots |w|\}$. One then has: \begin{equation}
\frac{^{w[K]}/_{w[I]}}{^{w[J]}/_{w[I]}} = ^{w[K]}/_{w[I+J]}.
\end{equation}\label{lm:coprod3}
\end{lemma}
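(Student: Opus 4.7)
The plan is to unfold both sides via the defining formula $u/v = S_{\phi_{Alph(v)}}(u)$, where $\phi_A(i)=0$ if $x_i\in A$ and $\phi_A(i)=i$ otherwise (with the standing convention $\phi_A(0)=0$), and then compare the two resulting substitution maps index-by-index on $w[K]$.

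Set $A:=Alph(w[I])$ and $A':=Alph(w[J])$. Since $Alph(w[I+J])=A\cup A'$, the right-hand side is simply $S_{\phi_{A\cup A'}}(w[K])$. For the left-hand side, the inner quotients rewrite the numerator and denominator of the outer fraction as $S_{\phi_A}(w[K])$ and $S_{\phi_A}(w[J])$ respectively, so the whole left-hand side equals $S_{\phi_B}\bigl(S_{\phi_A}(w[K])\bigr)=S_{\phi_B\circ\phi_A}(w[K])$ with $B:=Alph(S_{\phi_A}(w[J]))$. It therefore suffices to check that $\phi_B\circ\phi_A$ and $\phi_{A\cup A'}$ coincide on $IAlph(w[K])$.

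I would do this by a small case analysis on a letter $x_i$ of $w[K]$. If $x_i\in A$, both maps return $0$ directly. If $x_i\notin A$ but $x_i\in A'$, then $\phi_A(i)=i$ and $x_i\in B$ (since $x_i$ survives the inner quotient of $w[J]$), so both maps return $0$. If $x_i\notin A\cup A'$, then $\phi_A(i)=i$ and $x_i\notin B$, so both maps return $i$.

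The one subtle point to keep an eye on is the behaviour at index $0$: the set $B$ may strictly contain $x_0$ even when $x_0\notin A\cup A'$, because the quotient of $w[J]$ by $w[I]$ creates fresh $x_0$'s whenever $A'\cap A\neq\emptyset$. This would a priori prevent a direct set-level equality, but the discrepancy is confined to the single index $0$, which both $\phi_B$ and $\phi_{A\cup A'}$ fix by convention; hence it never affects the output on $w[K]$. Once this pitfall is neutralised, the verification is entirely mechanical and the lemma follows.
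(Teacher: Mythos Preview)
Your argument is correct. Both your proof and the paper's proof reduce the identity to a comparison of composed substitution maps on $IAlph(w[K])$, so the approaches are close in spirit; the organisational difference is that the paper first invokes Lemma~\ref{lm:coprod2}(2) to rewrite
\[
{}^{S_{\phi_A}(w[K])}\big/_{S_{\phi_A}(w[J])}=S_{\phi_A}\bigl({}^{w[K]}/_{w[J]}\bigr)=S_{\phi_A}\bigl(S_{\phi_{A'}}(w[K])\bigr),
\]
and then identifies $\phi_A\circ\phi_{A'}$ with $\phi_{A\cup A'}$, whereas you bypass that lemma entirely, keeping the outer quotient set $B=Alph(S_{\phi_A}(w[J]))$ as it is and checking $\phi_B\circ\phi_A=\phi_{A\cup A'}$ by a direct three-case split. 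Your route is slightly more self-contained: Lemma~\ref{lm:coprod2} is stated for \emph{strictly increasing} $\phi$, so invoking it with $\phi=\phi_A$ requires a small (easy) extension that the paper leaves implicit, while your case analysis handles everything on the spot. Your remark on the possible spurious appearance of $x_0$ in $B$ is exactly the point one must watch, and you dispose of it correctly.
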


\begin{proof}

Using Lemma \ref{lm:coprod2}, one has: \begin{align}
&\frac{^{w[K]}/_{w[I]}}{^{w[J]}/_{w[I]}} = ^{S_{\phi_I}(w[K])}/_{S_{\phi_I}(w[J])} = S_{\phi_I}(^{w[K]}/_{w[J]}) = S_{\phi_I}(S_{\phi_J}(w[K])) \notag\\& = S_{\phi_I \circ \phi_J}(w[K]) =  ^{w[K]}/_{w[I+J]}.
\end{align}
\qed \end{proof}

\ss
\begin{proposition}
The vector space $\mathcal{H}$ endowed with the coproduct \eqref{eq:coprod} is a coassociative coalgebra with co-unit (c-AAU). The co-unit is given by: 
$$\epsilon(w) = \begin{cases} 1 \mbox{ if } w = 1_\H,\\0 \mbox{ otherwise}.  \end{cases}$$ 
\end{proposition}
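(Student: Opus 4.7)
The plan is to expand both iterates of the coproduct as sums indexed by ordered partitions of $[1\dots|w|]$ into three disjoint blocks, and to match them term by term using the three preceding lemmas.

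First, expanding the left slot: for any $w\in\H$,
\begin{equation*}
(\Delta\otimes\mathrm{id})\circ\Delta(w)=\sum_{A+B=[1\dots|w|]}\Delta(pack(w[A]))\otimes pack(w[B]/_{w[A]}).
\end{equation*}
To compute $\Delta(pack(w[A]))$, I rewrite each term $pack(w[A])[A_1]$ using Lemma \ref{lm:coprod1} as $S_{\phi_{w[A]}}(w[A'_1])$ for $A'_1\subset A$, then strip the $S_{\phi_{w[A]}}$ using Lemma \ref{lm:coprod2}(1) (the map $\phi_{w[A]}$ is strictly increasing on $IAlph(w[A])$); for the quotient slot, I additionally invoke Lemma \ref{lm:coprod2}(2) to commute $S_{\phi_{w[A]}}$ past ``$/$''. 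The net result is
\begin{equation*}
\Delta(pack(w[A]))=\sum_{A_1+A_2=A} pack(w[A_1])\otimes pack(w[A_2]/_{w[A_1]}),
\end{equation*}
so $(\Delta\otimes\mathrm{id})\Delta(w)$ becomes a sum over triples $A_1+A_2+B=[1\dots|w|]$ whose generic summand is $pack(w[A_1])\otimes pack(w[A_2]/_{w[A_1]})\otimes pack(w[B]/_{w[A_1+A_2]})$.

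For the right slot, I perform the symmetric computation:
\begin{equation*}
(\mathrm{id}\otimes\Delta)\circ\Delta(w)=\sum_{I+J=[1\dots|w|]} pack(w[I])\otimes\Delta\bigl(pack(w[J]/_{w[I]})\bigr).
\end{equation*}
Writing $u=w[J]/_{w[I]}$ and applying the same combination of Lemmas \ref{lm:coprod1} and \ref{lm:coprod2} to $\Delta(pack(u))$, I reduce its summands to $pack(u[J_1])\otimes pack(u[J_2]/_{u[J_1]})$ for $J_1+J_2=[1\dots|J|]$. Identifying $J_1,J_2$ with subsets $K_1,K_2\subset J$ via the canonical bijection between positions of $w[J]$ and elements of $J$, I have $u[J_i]=w[K_i]/_{w[I]}$, so the second tensor factor is $pack(w[K_1]/_{w[I]})$ and the third is $pack\bigl((w[K_2]/_{w[I]})/(w[K_1]/_{w[I]})\bigr)$. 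Here Lemma \ref{lm:coprod3} collapses the iterated quotient to $pack(w[K_2]/_{w[I+K_1]})$. The result is again a sum over triples $I+K_1+K_2=[1\dots|w|]$ with exactly the same summand shape as before; the relabeling $(I,K_1,K_2)\leftrightarrow(A_1,A_2,B)$ yields equality, proving coassociativity.

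The counit axiom is then immediate: in $(\epsilon\otimes\mathrm{id})\circ\Delta(w)$ the factor $\epsilon(pack(w[I]))$ vanishes unless $w[I]=1_{X^*}$, i.e.\ $I=\emptyset$; the unique surviving term is $pack(w[J]/_{w[\emptyset]})=pack(w)=w$. The symmetric computation handles $(\mathrm{id}\otimes\epsilon)\circ\Delta$.

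The main obstacle is not conceptual but combinatorial bookkeeping: one must be careful that the reindexing map $\alpha$ of Lemma \ref{lm:coprod1} and the ``alphabet replacement'' operator $S_{\phi_I}$ behave compatibly under restriction to subwords, in particular that $(w[J]/_{w[I]})[J_1]=w[K_1]/_{w[I]}$ when $K_1\subset J$ corresponds to $J_1\subset[1\dots|J|]$. Once this compatibility is spelled out, the three lemmas plug in mechanically and both sides collapse to the same triple-partition sum.
\qed
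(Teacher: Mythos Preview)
Your proposal is correct and follows essentially the same route as the paper: both sides of the coassociativity identity are rewritten, via Lemmas \ref{lm:coprod1}--\ref{lm:coprod3}, as the single triple-partition sum
\[
\sum_{I+J+K=[1\dots|w|]} pack(w[I])\otimes pack(^{w[J]}/_{w[I]})\otimes pack(^{w[K]}/_{w[I+J]}),
\]
and the counit axiom is handled by observing that only the term with empty selection survives. Your write-up is in fact more explicit than the paper's about which lemma fires at which step.
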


\begin{proof}
Using the lemmas \ref{lm:coprod1}, \ref{lm:coprod2} and \ref{lm:coprod3}, one has the following identity:
\begin{eqnarray}\label{eq:LHSassc}
&(\Delta \otimes Id)\circ \Delta(w) = \sum_{I+J+K = [1\dots |w|]} pack(w[I]) \otimes pack(^{w[J]}/_{w[I]}) \notag\\&  \otimes pack(^{w[K]}/_{w[I+J]}) =(Id \otimes \Delta)\circ \Delta(w).
\end{eqnarray}

Thus, one can conclude that the coproduct \eqref{eq:coprod} is coassociative.

One can easily check that \begin{equation}\label{eq:counit}(\epsilon \otimes Id)\circ \Delta(w) = (Id \otimes \epsilon)\circ\Delta(w), \end{equation} for all word $w \in \mathcal{H}$.

One thus concludes that $(\mathcal{H},\Delta,\epsilon)$ is a c-AAU.
\qed \end{proof}

\ss
\begin{remark}
This coalgebra is not cocommutative, for example: 
\begin{align*}
T_{12} \circ \Delta (x_1^2) & = T_{12} (x_1^2 \otimes 1_\H + 2 x_1 \otimes x_0 + 1_\H \otimes x_1^2) \\ & = x_1^2 \otimes 1_\H + 2 x_0 \otimes x_1 + 1_\H \otimes x_1^2 \neq \Delta(x_1^2),
\end{align*}
where the operator $T_{12}$ is given by $T_{12}(u\otimes v) = v \otimes u$.
\end{remark}

\begin{lemma}\label{lm:prod_2}
Let $u,v$ be two words. Let $I_1 +J_1 = [1\dots |u|]$ and $I_2 + J_2 = [|u|+1\dots |u|+|v|]$. One then has
\begin{equation}\label{eq:mor2}
pack(^{u*v[J_1+J_2]}/_{u*v[I_1+I_2]}) = pack(^{u[J_1]}/_{u[I_1]}) * pack(^{v[J'_2]}/_{v[I'_2]}),
\end{equation} where $I'_2$ is the set $\{k-|u|,k\in I_2\}$ and $J'_2$ is the set $\{k-|u|,k\in J_2\}$.
\end{lemma}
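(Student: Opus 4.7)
The plan is to unfold both sides using $u*v = u \cdot T_{sup(u)}(v)$ (in the free monoid) and then combine the resulting alphabet structure with Lemma~\ref{lm:coprod2} and the morphism property of $pack$ (Theorem before Section~\ref{sec:freealg}).

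First, since $J_1\subseteq[1\dots|u|]$ and $J_2\subseteq[|u|+1\dots|u|+|v|]$, I would split
\begin{align*}
u*v[J_1+J_2] &= u[J_1]\cdot T_{sup(u)}(v[J'_2]),\\
u*v[I_1+I_2] &= u[I_1]\cdot T_{sup(u)}(v[I'_2]),
\end{align*}
exactly as in the proof of Lemma~\ref{lm:subword}. The non-zero indices of letters appearing in the $u$-part lie in $[1\dots sup(u)]$, whereas those appearing in the shifted $v$-part are strictly greater than $sup(u)$. Consequently, the alphabets of $u[I_1]$ and $T_{sup(u)}(v[I'_2])$ intersect only possibly in $\{x_0\}$, so the division by their union distributes across the concatenation:
\begin{equation*}
{}^{u*v[J_1+J_2]}/_{u*v[I_1+I_2]} = \left({}^{u[J_1]}/_{u[I_1]}\right)\cdot\left({}^{T_{sup(u)}(v[J'_2])}/_{T_{sup(u)}(v[I'_2])}\right).
\end{equation*}

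Next, Lemma~\ref{lm:coprod2}(2) applied to the strictly increasing map underlying $T_{sup(u)}$ rewrites the second factor as $T_{sup(u)}\bigl({}^{v[J'_2]}/_{v[I'_2]}\bigr)$. Setting $a = {}^{u[J_1]}/_{u[I_1]}$ and $b = {}^{v[J'_2]}/_{v[I'_2]}$, and using $sup(a)\leq sup(u)$, one obtains
\begin{equation*}
{}^{u*v[J_1+J_2]}/_{u*v[I_1+I_2]} = a\cdot T_{sup(u)}(b) = a * T_{sup(u)-sup(a)}(b).
\end{equation*}
Applying $pack$ to both sides, its morphism property for the shifted concatenation yields $pack(a)*pack\bigl(T_{sup(u)-sup(a)}(b)\bigr)$, and Lemma~\ref{lm:coprod2}(1) absorbs the remaining shift, producing $pack(a)*pack(b)$, which is the claimed identity.

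The main obstacle is the quotient-distribution step: it rests on the observation that the non-zero letter-indices coming from $u$ and from the shifted copy of $v$ inhabit disjoint integer intervals, so that cancelling the alphabets $Alph(u[I_1])$ and $Alph(T_{sup(u)}(v[I'_2]))$ is an independent operation on the two factors. Once this alphabet-disjointness is granted, the rest is a direct chaining of the lemmas already established.
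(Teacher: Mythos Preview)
Your argument is correct and follows essentially the same route as the paper: both split $u*v[J_1+J_2]$ and $u*v[I_1+I_2]$ along the boundary $|u|$, use the disjointness of the non-zero index ranges of $u$ and $T_{sup(u)}(v)$ to separate the quotient into two independent factors, and then reassemble via the morphism property of $pack$ together with the shift-absorption of Lemma~\ref{lm:coprod2}(1). The only difference is presentational: the paper compresses all this into a single chain of equalities written directly with the substitution operators $S_{\phi_I}$, whereas you spell out explicitly which earlier lemmas are being invoked at each step.
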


\begin{proof}
One has:
\begin{align}
& pack(^{u*v[J_1+J_2]}/_{u*v[I_1+I_2]}) = pack(S_{\phi_{I_1+I_2}}(u*v[J_1+J_2])) \notag\\
& = pack(S_{\phi_{I_1}+\phi_{I_2}}(u[J_1]T_{sup(u)}(v)[J_2]))  = pack(S_{\phi_{I_1}}(u[J_1])S_{\phi_{I_2}}(T_{sup(u)}(v[J'_2]))) \notag\\ 
& = pack(^{u[J_1]}/_{u[I_1]})T_{sup(^{u[J_1]}/_{u[I_1]})}pack(S_{\phi_{I'_2}}(v[J'_2]))  \notag\\
& = pack((^{u[J_1]}/_{u[I_1]})*pack(^{u[J'_2]}/_{u[I'_2]}).
\end{align}
\qed \end{proof}

\ss
\begin{proposition}
Let $u,v$ be two words in $\mathcal{H}$. One has: \begin{equation} \Delta(u*v) = \Delta(u)*^{\otimes 2} \Delta(v).\end{equation}
\end{proposition}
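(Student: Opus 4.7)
The plan is to unfold the left-hand side $\Delta(u*v)$ using Definition \ref{def:coprod} and then reorganize the summation so that each term factors as a tensor product of a contribution from $u$ and a contribution from $v$. More precisely, I would first write
\begin{equation*}
\Delta(u*v) = \sum_{K+L = [1\dots |u|+|v|]} pack((u*v)[K]) \otimes pack((u*v)[L]/_{(u*v)[K]}).
\end{equation*}
Each index subset $K \subseteq [1\dots |u|+|v|]$ splits uniquely as $K = I_1 + I_2$ with $I_1 \subseteq [1\dots |u|]$ and $I_2 \subseteq [|u|+1\dots |u|+|v|]$, and similarly $L = J_1 + J_2$. Thus the sum is reindexed by pairs $(I_1,J_1)$ partitioning $[1\dots |u|]$ together with pairs $(I_2,J_2)$ partitioning $[|u|+1\dots |u|+|v|]$.

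Next I would apply the two key factorization lemmas term by term. The first tensor factor is handled by Lemma \ref{lm:subword}, which gives
\begin{equation*}
pack((u*v)[I_1+I_2]) = pack(u[I_1]) * pack(v[I'_2]),
\end{equation*}
with $I'_2 = \{i-|u| : i \in I_2\}$. The second tensor factor is exactly what Lemma \ref{lm:prod_2} computes, namely
\begin{equation*}
pack((u*v)[J_1+J_2]/_{(u*v)[I_1+I_2]}) = pack(u[J_1]/_{u[I_1]}) * pack(v[J'_2]/_{v[I'_2]}).
\end{equation*}
Substituting these two identities into the sum decouples the $u$-indices from the $v$-indices.

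Once decoupled, the sum over $(I_1,J_1)$ and the sum over $(I_2,J_2)$ run independently. Using the fact that on rank-one tensors $(a\otimes b)*^{\otimes 2}(c\otimes d) = (a*c)\otimes(b*d)$, the double sum factors as
\begin{equation*}
\Bigl(\sum_{I_1+J_1=[1\dots|u|]} pack(u[I_1])\otimes pack(u[J_1]/_{u[I_1]})\Bigr) *^{\otimes 2} \Bigl(\sum_{I'_2+J'_2=[1\dots|v|]} pack(v[I'_2])\otimes pack(v[J'_2]/_{v[I'_2]})\Bigr),
\end{equation*}
which by Definition \ref{def:coprod} is exactly $\Delta(u)*^{\otimes 2}\Delta(v)$.

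The main obstacle is not conceptual but notational: one must carefully track the shift by $|u|$ (or by $sup(u)$ at the alphabet level) when passing between subsets of $[|u|+1\dots|u|+|v|]$ and their primed counterparts in $[1\dots|v|]$, and verify that the correspondence $(I_2,J_2) \leftrightarrow (I'_2,J'_2)$ is a bijection that preserves the partition condition. Fortunately all this bookkeeping is absorbed by Lemmas \ref{lm:subword} and \ref{lm:prod_2}, so the proof reduces to the substitution and reindexing described above.
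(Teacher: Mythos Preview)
Your proposal is correct and follows exactly the approach the paper intends: the paper's proof is a one-line ``direct check'' invoking Lemma~\ref{lm:prod_2}, and you have simply spelled out that check in full, also making explicit the use of Lemma~\ref{lm:subword} for the first tensor factor. Nothing substantive differs between your argument and the paper's.
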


\begin{proof}

Using the Lemma \ref{lm:prod_2}, the proof can be done by a direct check.

\qed \end{proof}

\medskip
Since $\mathcal{H}$ is graded by the word's length, one has the following theorem:

\begin{theorem}
$(\mathcal{H},*,1_\H,\Delta,\epsilon)$ is a Hopf algebra.
\end{theorem}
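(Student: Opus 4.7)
The plan is to present $\H$ as a \emph{connected graded bialgebra} and then invoke the classical theorem that such a bialgebra automatically admits a (necessarily unique) antipode, hence is a Hopf algebra.

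First I would verify that the length grading $\H=\bigoplus_{n\geq 0}\H_n$, where $\H_n$ is the $k$-span of the packed words of length $n$, is compatible with both structures. Product compatibility is immediate: the shifted concatenation merely reindexes letters, so $|u*v|=|u|+|v|$ and $\mu(\H_m\otimes\H_n)\subseteq\H_{m+n}$. For the coproduct, given a packed word $w$ of length $n$ and a partition $I+J=[1\dots n]$ with $|I|=i$ and $|J|=n-i$, the quotient $w[J]/_{w[I]}$ is obtained from $w[J]$ by a letterwise substitution, which does not change the length, and the $pack$ operator also preserves length. Consequently each summand of $\Delta(w)$ lies in $\H_i\otimes\H_{n-i}$ for the appropriate $i$, so $\Delta(\H_n)\subseteq\bigoplus_{i+j=n}\H_i\otimes\H_j$.

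Second, I would observe that this grading is \emph{connected}: the only word of length $0$ is $1_{X^*}$, which is packed, so $\H_0=k\cdot 1_{X^*}\cong k$. Together with the bialgebra axioms already established (associativity, unit, coassociativity, counit, and the morphism property $\Delta(u*v)=\Delta(u)*^{\otimes 2}\Delta(v)$), this places $\H$ in the setting of connected graded bialgebras.

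Finally, the antipode $S$ can be produced by induction on the degree. Set $S(1_\H)=1_\H$ and, for a homogeneous element $w$ of positive degree with reduced coproduct $\Delta(w)=w\otimes 1_\H+1_\H\otimes w+\sum w'\otimes w''$ (where $w'$ and $w''$ are of strictly smaller positive degree in both tensor factors, by connectedness), define $S(w)=-w-\sum S(w')*w''$. The identity $\mu\circ(S\otimes Id)\circ\Delta=\eta\circ\epsilon$ then holds by construction, while the opposite identity $\mu\circ(Id\otimes S)\circ\Delta=\eta\circ\epsilon$ follows by the symmetric recursion. I do not expect any substantive obstacle here: the verification is essentially bookkeeping, and the only nontrivial prerequisite—connectedness of the grading—is immediate.
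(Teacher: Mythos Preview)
Your proposal is correct and follows essentially the same route as the paper: the paper simply remarks that $\mathcal{H}$ is graded by word length and asserts that the Hopf structure follows, giving the same antipode recursion you describe immediately afterward. You have merely made explicit the verification of graded compatibility and connectedness that the paper leaves implicit.
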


\begin{proof}
The proof follows from the above results.
\qed \end{proof}

\medskip
For $w \neq 1_\H$, the antipode is given by the recursion: \begin{equation}
S(w) = -w - \sum_{I+J =[1\dots |w|], I,J\neq \emptyset} S(pack(w[I]))*pack(^{w[J]}/_{w[I]}).
\end{equation} 



\section{Hilbert series of the Hopf algebra $\wmat$}

In this section, we compute the number of packed words with length $n$ and supremum $k$. 
Using the formula of Stirling numbers of the second kind, 
one can get the explicit formula for the number of packed words with length $n$, number which we denote by $d_n$.  

\begin{definition} The Stirling numbers of the second kind count the number of set unordered partitions of an $n$-element set into precisely $k$ non-void parts (or blocks). The Stirling numbers, denoted by $S(n,k)$ are given by the recursive definition:

\begin{enumerate}
\item $S(n,n) =1 (n \geq 0)$,
\item $S(n,0) =0 (n > 0)$,
\item $S(n+1,k) = S(n,k-1) + kS(n,k)$, for $0<k\leq n$.
\end{enumerate}
\end{definition}

One can define a word without $x_0$ by its positions, 
this means that if a word $w=x_{i_1}x_{i_2}\dots x_{i_n}$ has length $n$ and 
alphabet $IAlph(w) = \{1,2,\dots,k\}$, then this word can be determine from the list 
$[S_1,S_2,\dots,S_k]$, where $S_i$ is the set of positions of $x_i$ in the word $w$, with $1 \leq i \leq k$.
It is straightforward to check that $(S_i)_{0 \leq i \leq k}$ is a partition of $[1 \dots n]$.

One can divide the set of packed words with length $n$ and supremum $k$ in two parts: \textit{``pure"} packed words (which have no $x_0$ in their alphabet), denote $pack_{n,k}^+(X)$ and packed words which have $x_0$ in their alphabet, denote $pack_{n,k}^0(X)$. It is clear that: \begin{equation}
d(n,k) = \# pack_{n,k}^+(X) + \# pack_{n,k}^0(X).
\end{equation}

Let us now compute the cardinal of these two sets $pack_{n,k}^+(X)$ and $pack_{n,k}^0(X)$.

Consider a word $w \in pack_{n,k}^+(X)$, then $IAlph(w) = \{1,2,\dots,k\}$. This word is determined by $[S_1,S_2,\dots,S_k]$, in which $S_i$ is a set of positions of $x_i$, for $1\leq i \leq k$. One can see that:\begin{enumerate}
\item $S_i \neq \emptyset$, $\forall i \in [1,k]$;
\item $\sqcup_{1 \leq i \leq k} S_i = \{1,2,\dots,n\}$.
\end{enumerate}

Note that 1-2 hold even with $w=1_\H$.

Thus, one has the cardinal of packed words with length $n$ and supremum $k$:
\begin{equation}\label{eq:d+}
d^+(n,k)=\# pack_{n,k}^+(X) = S(n,k)k!.
\end{equation}

Similarly, a 
word $w \in \# pack_{n,k}^0(X)$ can be determined by $[S_0,S_1,S_2,\dots,S_k]$ where $S_i$ is the set of positions of $x_i$, for all $0\leq i \leq k$. 
One then has: 
\begin{equation}\label{eq:d0}
d^0(n,k)=\# pack_{n,k}^0(X) = S(n,k+1)(k+1)!.
\end{equation}

From the two equations above, one can get the number of packed word with length $n$, supremum $k$:
\begin{equation}\label{eq:dnk1}
d(n,k) = d^+(n,k) + d^0(n,k) = S(n,k)k! + S(n,k+1)(k+1)! = S(n+1,k+1)k!.
\end{equation}

From this formula, using Maple, one can get some values of $d(n,k)$. We give in the Table \ref{tab:dnk} the first values.
\begin{table}[!ht]
\caption{Values of $d(n,k)$ given by the explicit formula \eqref{eq:dnk1} and computed with Maple.\label{tab:dnk}}
\begin{center}
\begin{tabular}{ccrrrrrrrrr}\hline
 &&\multicolumn{9}{c}{k}\\
&&0&1&2&3&4&5&6&7&8\\\cline{1-11}
\multicolumn{1}{l}{\multirow{9}{*}{n}}& \multicolumn{1}{l}{0}&1&0&0&0&0&0&0&0&0\\
\multicolumn{1}{l}{}&\multicolumn{1}{l}{1}&1&1&0&0&0&0&0&0&0\\
\multicolumn{1}{l}{}&\multicolumn{1}{l}{2}&1&3&2&0&0&0&0&0&0\\
\multicolumn{1}{l}{}&\multicolumn{1}{l}{3}&1&7&12&6&0&0&0&0&0\\
\multicolumn{1}{l}{}&\multicolumn{1}{l}{4}&1&15&50&60&24&0&0&0&0\\
\multicolumn{1}{l}{}&\multicolumn{1}{l}{5}&1&31&180&390&360&120&0&0&0\\
\multicolumn{1}{l}{}&\multicolumn{1}{l}{6}&1&63&602&2100&3360&2520&720&0&0\\
\multicolumn{1}{l}{}&\multicolumn{1}{l}{7}&1&127&1932&10206&25200&31920&20160&5040&0\\
\multicolumn{1}{l}{}&\multicolumn{1}{l}{8}&1&255&6050&46620&166824&317520&332640&181440&40320\\\cline{1-11}
\end{tabular}
\end{center}
\end{table}

Note that the values of Table \ref{tab:dnk} correspond to those of the triangular array $A028246$ of Sloane \cite{Slo}.

\begin{remark}
 Formulas \eqref{eq:d+} and \eqref{eq:d0} imply that the packed words 
of length $n$ and supremum $k$ without, and respectively with, $x_0$ 
 are in bijection with the circularly ordered partitions of $[n]$ in $k$ parts and respectively in $k+1$ parts. 
 Therefore \eqref{eq:dnk1} implies that the set of packed words of length $n$ with supremum $k$ is in bijection with the circularly ordered partitions of $n+1$ elements in $k+1$ parts.
 \end{remark}

The formula for the number of packed words of length $n$, $d_n$ ($n\geq 1$), is then given by \begin{align}\label{eq:dn}
&d_n =\sum_{k=0}^{n} d(n,k)= \sum_{k=0}^n S(n+1,k+1)k!.
\end{align}

Using again Maple, one can get the values listed in Table \ref{tab:dn}.
\begin{table}[!ht]
\caption{Some values of $d_n$ obtained from formula \eqref{eq:dn}.\label{tab:dn}}
\begin{center}
\begin{tabular}{crrrrrrrrrrrr|}\hline
n&0&1&2&3&4&5&6&7&8&9&10\\\hline
$d_n$&1&2&6&26&150&1082&9366&94586&1091670&14174522&204495126 \\\hline
\end{tabular}
\end{center}
\end{table}

The number of packed words is the sequence 
$A000629$ of Sloane
\cite{Slo}, where it is also mentioned that this sequence corresponds to 
the ordered Bell numbers sequence times two (except for the $0$th order term).

The ordinary and exponential generating function of our sequence are also given in \cite{Slo}. 
The ordinary one is given by 
the formula: $\sum_{n\ge 0} \frac{2^n n! x^n}{\prod_{k=0}^n (1+k x)}$. 
The exponential one 
is given by: $\frac{e^x}{2-e^x}$. Let us give the proof of this.

\ss
Firstly, recall that 
the exponential generating function of the ordered Bell numbers 
(see, for example, page $109$ of Philippe Flajolet's book \cite{FS09}) is:

\begin{equation}\label{eq:pt2}
\frac{1}{2-e^x} = \sum_{n\geq 0 } \sum_{k = 0}^n S(n,k)k! \frac{x^n}{n!}.
\end{equation}

By deriving both side of equation \eqref{eq:pt2} with respect to $x$ , one obtains: 
\begin{equation}\label{eq:pt3}
\frac{e^x}{2-e^x} = \sum_{n\geq 1 } \sum_{k = 1}^n S(n,k)k! \frac{x^{n-1}}{(n-1)!}.
\end{equation}

From equations \eqref{eq:dn} and \eqref{eq:pt3}, one gets the exponential generating function of our sequence:

\begin{equation}\label{eq:egfdn}
\frac{e^x}{2-e^x} = \sum_{n\geq 0 } \sum_{k = 0}^n S(n+1,k+1)k! \frac{x^{n}}{n!} = \sum_{n\geq 0 } d_n \frac{x^{n}}{n!}.
\end{equation}

\bigskip
Let us now investigate the combinatorics of irreducible packed words (see Definition \ref{def:irrword}).
Firstly, we notice that one still has an infinity of irreducible packed words of weight $m$, which are again
obtained by adding multiple copies of the letter $x_0$.

\begin{example}
The word $x_1x_0^kx_1x_0^kx_1$ (with $k$ an arbitrary integer) 
is an irreducible packed word of weight $3$.
\end{example}

Let us denote by $i_n$ the number of irreducible packed words of length $n$. Then one has:
\begin{equation}
i_n =  \sum_{\substack{j_1+ \dots + j_k =n\\j_l \neq 0}}(-1)^{k+1} d_{j_1} \dots d_{j_k}.
\end{equation}

Using Maple, one can get the values of $i_n$, which we give  in Table \ref{tab:irr} below.

\begin{table}[!ht]
\caption{Ten first values of the number of irreducible packed words.\label{tab:irr}}
\begin{center}
\begin{tabular}{crrrrrrrrrrrr}\hline
n&0&1&2&3&4&5&6&7&8&9&10\\\hline
$i_n$ &1 &2 &2 &10 &66 &538 &5170 &59906 &704226 &9671930 &145992338 \\\hline
\end{tabular}
\end{center}
\end{table}

Note that this sequence does not appear in Sloane's On-Line Encyclopedia of Integer Sequences \cite{Slo}.

\section{Primitive elements of $\wmat$}

Let us emphasize that this Hopf algebra, although graded, is not cocommutative and thus the primitive
elements do not generate the whole algebra but only a sub Hopf algebra on which $\Delta$ is cocommutative (the biggest on which CQMM theorem holds).

We denote by $Prim(\wmat)$ the algebra generated by the primitive elements of $\H$.

\ss
Let us recall the following result:
\begin{lemma}\label{lm:graded}
Let $V^{(1)}$ and $V^{(2)}$ be two graded vector space.
\begin{equation}
V^{(i)} = \oplus_{n\geq 0} V_n^{(i)} ,\ i = 1, 2.
\end{equation}                                           
Let $\phi \in  Hom^{gr} (V^{(1)} , V^{(2)} )$, that means $(\forall n \geq 0)(\phi (V^{(1)}_n ) \subseteq V_n^{(2)})$.
Then, $Ker(\phi)$ is graded.
\end{lemma}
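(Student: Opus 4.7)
The plan is to show directly that $\ker(\phi) = \bigoplus_{n\geq 0}\bigl(\ker(\phi)\cap V_n^{(1)}\bigr)$ by exploiting the uniqueness of the homogeneous decomposition in $V^{(2)}$. The inclusion $\bigoplus_{n\geq 0}\bigl(\ker(\phi)\cap V_n^{(1)}\bigr)\subseteq \ker(\phi)$ is immediate from the definition of the kernel and the linearity of $\phi$, so the real content lies in the reverse inclusion.

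For the nontrivial direction, I would start with an arbitrary element $v\in\ker(\phi)$ and use the grading of $V^{(1)}$ to write it uniquely as a finite sum $v=\sum_{n\geq 0} v_n$ with $v_n\in V_n^{(1)}$ (almost all zero). Applying $\phi$ and using that $\phi$ is a graded morphism, one obtains
\begin{equation}
0=\phi(v)=\sum_{n\geq 0}\phi(v_n),\qquad \phi(v_n)\in V_n^{(2)}.
\end{equation}
Since $V^{(2)}=\bigoplus_{n\geq 0}V_n^{(2)}$ is a direct sum decomposition, the uniqueness of the homogeneous components forces $\phi(v_n)=0$ for every $n$. Hence each $v_n$ lies in $\ker(\phi)\cap V_n^{(1)}$, so $v\in\bigoplus_{n\geq 0}\bigl(\ker(\phi)\cap V_n^{(1)}\bigr)$, which gives the required inclusion.

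The only thing that deserves a moment's care is verifying that the sum $\sum_n\bigl(\ker(\phi)\cap V_n^{(1)}\bigr)$ is indeed direct, but this is automatic: since it is a sum of subspaces of $V^{(1)}$ lying inside distinct graded components $V_n^{(1)}$, and the latter sum is direct by hypothesis, the intersections are in direct sum as well. I do not expect any real obstacle here; the argument is a textbook verification, and the statement is used later only as a convenient lemma to conclude that $\mathrm{Prim}(\mathbf{WMat})$ inherits the grading of $\mathcal{H}$ via the kernel of $\Delta-(\mathrm{Id}\otimes 1+1\otimes\mathrm{Id})$.
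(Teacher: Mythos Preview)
Your argument is correct and is exactly the standard textbook verification: decompose $v\in\ker(\phi)$ into homogeneous components, push $\phi$ through, and use the direct-sum property of $V^{(2)}$ to conclude that each component lies in the kernel. Note, however, that the paper does not actually supply a proof of this lemma; it is merely \emph{recalled} as a known result before being applied to $\Delta^+$. So there is nothing to compare against, and your proof simply fills in the omitted (routine) details.
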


\medskip
One then has:
\begin{proposition}
$Prim(\wmat)$ is a Lie subalgebra of $\wmat$, graded by the word's length.
\end{proposition}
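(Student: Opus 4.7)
The plan is to establish two facts about the vector space $V$ of primitive elements of $\wmat$: (a) $V$ is a graded subspace with respect to word length, and (b) $V$ is closed under the commutator bracket $[x,y]:=x*y-y*x$. Since $\prim(\wmat)$ is the subalgebra generated by $V$ inside the graded associative algebra $(\wmat,*,1_\H)$ (the shifted concatenation satisfies $|u*v|=|u|+|v|$), (a) implies that $\prim(\wmat)$ is graded by word length, and (b) ensures that the generating subspace is already a Lie subspace; combined with the fact that any associative subalgebra is automatically a Lie subalgebra under the commutator, the proposition follows.

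For (a), I introduce the reduced coproduct $\tilde\Delta(x):=\Delta(x)-x\otimes 1_\H - 1_\H \otimes x$, so that $V=\ker(\tilde\Delta)$. Equip $\H\otimes\H$ with the total grading $(\H\otimes\H)_n = \bigoplus_{p+q=n}\H_p\otimes\H_q$. The key observation is that $\Delta$ is itself graded: for $w\in\H_n$, each summand $pack(w[I])\otimes pack(w[J]/_{w[I]})$ from Definition~\ref{def:coprod} lies in $\H_{|I|}\otimes\H_{|J|}$ with $|I|+|J|=n$, since both $pack$ and the quotient operation $/_{w[I]}$ preserve word length. As $1_\H\in\H_0$, the maps $x\mapsto x\otimes 1_\H$ and $x\mapsto 1_\H\otimes x$ send $\H_n$ into $\H_n\otimes\H_0\subseteq(\H\otimes\H)_n$, hence $\tilde\Delta$ is a graded linear morphism. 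Lemma~\ref{lm:graded} then yields that $V=\ker(\tilde\Delta)$ is graded, and consequently the subalgebra it generates is a graded subalgebra of $\wmat$.

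For (b), the computation is the standard bialgebra one. For $x,y\in V$, multiplicativity of $\Delta$ gives
\begin{align*}
\Delta(x*y) &= \Delta(x)*^{\otimes 2}\Delta(y) = (x\otimes 1_\H + 1_\H\otimes x)*^{\otimes 2}(y\otimes 1_\H + 1_\H\otimes y)\\
&= (x*y)\otimes 1_\H + x\otimes y + y\otimes x + 1_\H\otimes(x*y),
\end{align*}
and subtracting the same identity with $x$ and $y$ interchanged cancels the cross terms, leaving $\Delta([x,y])=[x,y]\otimes 1_\H + 1_\H\otimes [x,y]$, so $[x,y]\in V$. The only substantive step is the verification that $\Delta$ preserves the grading, which is immediate from the coproduct formula once one notes the length-preservation properties of $pack$ and of $/_{w[I]}$; the rest is routine bialgebra bookkeeping, so I do not anticipate any real obstacle.
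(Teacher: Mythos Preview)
Your proof is correct and follows essentially the same route as the paper: define the reduced coproduct $\tilde\Delta=\Delta - \mathrm{id}\otimes 1_\H - 1_\H\otimes\mathrm{id}$, observe that it is a graded map because $pack$ and $/_{w[I]}$ preserve length, and invoke Lemma~\ref{lm:graded} to conclude that the primitives form a graded subspace. Your write-up is in fact more careful than the paper's, which identifies $Prim(\wmat)$ with $\ker(\Delta^+)$ without distinguishing the space of primitives from the subalgebra it generates, and which omits the (standard) commutator computation you supply in part~(b).
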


\begin{proof}
Let us define the mapping \begin{eqnarray}
\Delta^+:  & \wmat  \longrightarrow \wmat \otimes \wmat \notag\\
& \begin{cases} 1_\H & \longmapsto 0 \\ h & \longmapsto \Delta(h) - 1_\H \otimes h -h \otimes 1_\H \end{cases}.
\end{eqnarray}

This mapping is graded. Using Lemma \ref{lm:graded}, one has $Prim(\wmat) = Ker(\Delta^+)$. Thus, the subalgebra $Prim(\wmat)$ is graded.
\qed \end{proof}

\medskip
Let $\wmat_n$ be the subalgebra generated by the packed words of length $n$, $n \geq 0$. One can see an element $P$ of this subalgebra as a polynomial of packed words of length $n$.

\begin{equation}
P = \sum_{w \in \wmat_n} \left\langle P|w\right\rangle w.
\end{equation} 

Let us now compute the dimensions of the first few spaces $Prim(\wmat)_n$.
For $n=1$, 
one has a basis formed by the primitive elements $x_0$ and $x_1$. 
Then one can check that the primitive elements of length $1$ have the form $ax_0 + bx_1$, with $a$ 
and $b$ scalars. 
%
For $n=2$, one has a basis formed by the primitive elements: $x_0 x_1 - x_1 x_0$ and $x_1 x_2 - x_2 x_1$. 
Then one can check that all the primitive elements of the length $2$ 
have the form $a(x_0 x_1 - x_1 x_0 ) + b(x_1 x_2 - x_2 x_1)$, with $a$ and $b$ scalars.
This comes from explicitly solving a system of $4$ equations with $d_2=6$ variables.

Nevertheless, the explicit calculations quickly become lengthy. Thus, for $n=3$, 
one has to solve a system of $22$ equations with $26$ variables. 



\section{Maple coding}


To test our results with Maple, 
we implement a random word in the following way.
To each word we associate a certain monomial which encodes, using a given alphabet 
the position of any letter and its value. For example,
to the word $x_2 x_3$ we associate the monomial
 $a_1^2a_2^3$ where the powers ($2$ and respectively $3$) correspond
to the values of the letters ($x_2$ and respectively $x_3$)
and the indices ($1$ and $2$) correspond to the positions of the respective letters.

One has to keep in mind that the letter $x_0$ can also be present in the words.
which is encoded with a supplementary word length variable.


Using this idea we can then implement in Maple packed words 
(obtained with a Maple function taking as an argument a general word). 

We have also implemented the LHS (left hand side) and the RHS (right hand side)
of the coproduct formula. For this purpose, one needs to refine the above function
by considering two distinct alphabets to ''build up'' the words, such that one 
can easily separate - as function of the different alphabets - the LHS from the RHS.

Finally, using all of the above, we have checked the coassociativity condition 
for random words up to length $7$, with maximal power $7$.



\section*{Acknowledgements}

We acknowledge Jean-Yves Thibon for various discussions and suggestions.
The authors also acknowledge a Univ. Paris 13, Sorbonne Paris Cit\'e BQR grant.
A. Tanasa further acknowledges
the grants PN 09 37 01 02 and CNCSIS Tinere Echipe 77/04.08.2010.
G. H. E. Duchamp acknowledges the grants ANR BLAN08-2\_332204 (Physique Combinatoire) and PAN-CNRS 177494 (Combinatorial Structures and Probability Amplitudes).



\addcontentsline{toc}{section}{References}
\bibliographystyle{plain}

\end{document}